\newtheorem{thm}{Theorem}[section]
\newtheorem{lem}[thm]{Lemma}
\newtheorem{cor}[thm]{Corollary}
\theoremstyle{definition}
\newtheorem{rem}[thm]{Remark}
\newtheorem{defi}[thm]{Definition}
\pgfplotsset{compat=1.18}
\theoremstyle{thmstyleone}%
\theoremstyle{thmstyletwo}%
\theoremstyle{thmstylethree}%
\newtheorem{definition}{Definition}%
\begin{document}

\title[Equivalence of the NEC to variable lower bounds on the timelike Ricci curvature for $C^2$-Lorentzian metrics]{Equivalence of the null energy condition to variable lower bounds on the timelike Ricci curvature for $C^2$-Lorentzian metrics}


\author*[1]{\fnm{Melanie} \sur{Graf}}\email{melanie.graf@uni-hamburg.de}

\author*[1]{\fnm{Yaver} \sur{Gulusoy}}\email{yaver.gulusoy@gmail.com}

\affil[1]{\orgdiv{Department of Mathematics}, \orgname{University of Hamburg}, \orgaddress{\street{Bundesstraße 55},  \postcode{20146} \city{Hamburg}
, \country{Germany}}}


\abstract{The null energy or null convergence condition (NEC) is one of the fundamental assumptions necessary for many celebrated results from Lorentzian Geometry and Mathematical General Relativity. As such there have been several recent efforts to find a good generalization of this condition to the new setting of Lorentzian length spaces or metric measure spacetimes. One important property any such generalization should fulfill is consistency with the classical formulation for a class of spacetimes as large as possible. The purpose of this note is to show that the reformulation of the NEC by McCann as variable lower timelike Ricci curvature bounds in \cite{mccann2024synthetic} remains equivalent to the classical NEC not just for smooth but even for $C^2$-metrics, where McCann's original proof needs to be modified.}

\keywords{null energy condition, metric measure spacetimes, variable lower timelike Ricci curvature bounds, (timelike/null) curvature dimension condition}

\maketitle

\newpage 
\newcommand{\seqIndex}{m}
\newcommand{\gradRiem}{\operatorname{grad}} 
\newcommand{\horizontalDecomp}[1]{#1^{h}}
\newcommand{\verticalDecomp}[1]{#1^{v}}
\newcommand{\horizontalLift}[1]{hor(#1)}
\newcommand{\verticalLift}[1]{vert(#1)}

\newcommand{\M}{\mathbb{R}^n}
\newcommand{\SasakiMetric}{\widetilde{G}}
\newcommand{\R}{\mathbb{R}}
\renewcommand{\v}{\mathbf{v}}
\renewcommand{\S}{\mathbb{S}}

\section{Introduction}

Curvature or energy conditions of the form $\mathrm{Ric}(\v,\v)\geq 0$ for certain subsets of vectors $\v\in TM$ have been fundamental to many mathematical developments in General Relativity and Lorentzian Geometry since their introduction to the field in the 1960ies. These conditions have found numerous applications, most notably in the formulation of singularity and splitting theorems. As such, finding replacements for the classical energy conditions naturally forms an integral part of the newly developing theory of Lorentzian length spaces or metric measure spacetimes (cf. \cite{cavalletti2022review} and references therein). While timelike Ricci curvature bounds are by now quite well understood even in these settings (starting from the works of \cite{mccanntcd, mondino2022optimal-transport-einstein, CMoriginal}), how to treat the null energy or null convergence condition (NEC), 
\begin{definition}[Classical null energy condition]\label{def:classicNEC}
    A spacetime\footnote{that is a smooth manifold $M$ with a smooth or at least $C^2$  Lorentzian metric $g$ and a time orientation} $(M,g)$ satisfies the classical null energy condition (classical NEC) if
    $$\mathrm{Ric}(\v,\v)\geq 0 \quad \text{ for all }\v\in TM \text{ with } g(\v,\v)=0,$$
\end{definition}
\noindent and null geometry more broadly has remained a more open problem. \\

A better understanding of the NEC specifically is desirable on multiple levels: For one the NEC is weaker and more physically well founded than the strong energy condition. It is also satisfied in scenarios where the strong energy condition is violated even classically, such as the massive scalar field, and while taking into account quantum effects will lead to a violation of the NEC in semiclassical settings there is hope of retaining at least some 'averaged NEC' (cf. the discussions in \cite[p. 723 and 801]{BigSenovillaReview}).

The second reason lies in what is also part of the mathematical difficulty namely that, contrary to timelike Lorentzian geometry, null geometry cannot as directly draw inspiration from Riemannian geometry, where the study of $\mathrm{(R)CD}$ spaces has been a very active field of research since the introduction of $\mathrm{CD}$ spaces by Sturm \cite{sturm2006geometry, sturm2006geometry2} and independently by Lott and Villani \cite{lott2009ricci} in the mid 2000s.\\

Recently two proposals for a NEC in a purely synthetic setting  have been put forward, cf. \cite{mccann2024synthetic} and \cite{CMM2025}. 
Their character is very different: In the first proposal, McCann bases his definition in \cite{mccann2024synthetic} on, essentially, an approximation argument. Since there are timelike vectors arbitrarily close to null vectors and the only way for the quotient $\frac{\mathrm{Ric}(\v,\v)}{|g(\v,\v)|}$ to diverge 
is if $\v$ becomes arbitrarily close to being null, \cite{mccann2024synthetic} first replaces the classical NEC with a variable lower bound on the timelike Ricci curvature,
\begin{definition}[Variable timelike curvature condition]\label{def:RobertNEC}
     A spacetime $(M,g)$ is said to have a variable lower bound on the timelike Ricci curvature if for any compact $Z\subset M$ there exists a constant $C_Z^T\in \R$ such that
    $$\mathrm{Ric}(\v,\v)\geq C_Z^T \, g(\v,\v) \quad \text{ for all }\v\in TM|_Z \text{ with } g(\v,\v)<0,$$
\end{definition}
and then uses this together with one of the established formulations of timelike Ricci curvature bounds in metric measure spacetimes  as the basis for his definition of a NEC for metric measure spacetimes: A (sufficiently nice) metric measure spacetime is said to satisfy a (weak) null energy-dimension condition, $(w)\mathrm{NC}^e_q
(N)$, (with dimension $N>0$) if 
 for any compact $Z\subseteq M$ there exists a constant $C_Z\in \R$ such that the metric measure spacetime $J(Z,Z):=J^+(Z)\cap J^-(Z)$ satisfies the respective timelike curvature dimension condition with curvature bound $C_Z$ and dimension $N>0$.\\ 

While this work will focus on McCann's approach, let us for completeness also briefly discuss the more recent proposal by Cavalletti, Manini and Mondino, \cite{CMM2025}. They define their synthetic null energy condition, $\mathrm{NC}^e(N)$, based on the concavity of the Shannon entropy power functional along optimal transport along a given null hypersurface with prescribed parametrization and measure. This program builds on equivalences established in the smooth setting, cf. their earlier work \cite{CMM2024}  and also \cite{ketterer2024characterization} for  a related approach using optimal transport based on null displacement convexity of the relative Renyi entropy that has been independently  explored by Ketterer. 
 
 As part of Cavalletti, Manini and Mondino's approach, instead of finding a way to incorporate null geometry strictly within the established framework of Lorentzian length spaces or metric measure spacetimes, their central objects of study become what they define to be synthetic null hypersurfaces: Triples $(H,G,\mathfrak{m})$ consisting of a closed achronal set $H$ in a topological causal space, a gauge function $G$ on $H$ encoding the desired parametrization along causal curves in $H$, and a Radon measure $\mathfrak{m}$ on $H$.
  Such synthetic null hypersurfaces can either be studied on their own or potentially as structures within a given larger synthetic Lorentzian space. However, 
 trying to determine synthetic null hypersurfaces within a given synthetic Lorentzian space poses some difficulties: 
 While one may start by considering  
achronal sets 
within a synthetic Lorentzian space, the (equivalence class of) the gauge function and the measure required to turn them into synthetic null hypersurfaces do not always appear canonically arise from the given data.\footnote{One can construct explicit examples of causally well-behaved Lorentzian length spaces where a given achronal boundary inherits two very different gauges and measures turning it into a synthetic null hypersurface in the  sense of Cavalletti, Manini and Mondino. This will be further explored in upcoming work of Sa\'ul Burgos, Leonardo Garc\'ia-Heveling and Melanie Graf.} As a consequence 'satisfying the $\mathrm{NC}^e(N)$ of \cite{CMM2025}' is a priori not a property of the synthetic Lorentzian space itself.

On the other hand, 
their definition very elegantly builds 
on both the geometric meaning of the NEC, 
which largely arises from its consequences on the expansion of families of null geodesic foliating null hypersurfaces, and established techniques from optimal transport. This allows Cavalletti, Manini and Mondino to derive strong consequences, such as a version of the Penrose singularity theorem and of Hawking's area theorem for synthetic null hypersurfaces, which are so far not available for McCann's synthetic NEC.\\

Whichever approach one chooses, any formulation of a NEC for Lorentzian length spaces/metric measure spacetimes should at minimum be equivalent to the classical NEC for a large class of smooth spacetimes -- which both \cite{mccann2024synthetic} and \cite{CMM2025} achieve in principle\footnote{Since \cite{CMM2025}  considers a slightly different setting, their equivalence is not for spacetimes per se, but rather for nice enough null hypersurfaces, such as achronal boundaries, within them, when equipped with their natural geodesic gauge and the corresponding rigged measure.} 
-- and better yet also for less regular spacetimes. Of particular interest are regularities where either the classical NEC is still well-defined ($C^2$-metrics) or other useful generalizations of the NEC, e.g.\ based on almost everywhere ($C^{1,1}_{\mathrm{loc}}$-metrics) or distributional curvature bounds ($C^1$-metrics), are already available and have been successfully used to derive geometric consequences such as a version of Penrose's singularity theorem and related results, cf.\ \cite{graf2020singularity, schinnerl2021note, kunzinger2022hawking}.

The purpose of this short note is not to  address these low-regularity compatibility questions in full generality, as they are expected to become very subtle (and indeed have only recently been comprehensively answered even in the Riemannian setting, cf. \cite{MondinoRyborz}), 
but rather to investigate a peculiar regularity gap specifically in the equivalence of McCann's proof of the equivalence of Definitions \ref{def:classicNEC} and \ref{def:RobertNEC}, which is the main motivation behind his definition of the synthetic $\mathrm{NC}_q^e(N)$ condition: 
 While McCann formulates his full consistency result \cite[Thm.\ 31]{mccann2024synthetic} only for smooth metrics, a brief examination shows that $C^3$ would be sufficient for his proofs to go through without modifications as the central new result for the proof,  \cite[Thm.\ 26]{mccann2024synthetic}, which establishes the aforementioned equivalence of Definitions \ref{def:classicNEC} and \ref{def:RobertNEC}, is formulated for arbitrary $C^1$-tensor fields $F$ and thus applies to both $F=\mathrm{Ric}_g$ for a $C^3$-metric $g$ and to $F=\mathrm{Ric}^{N,V}_g$ for  the {\em $N$-Bakry-\'Emery-Ricci tensor} $$\mathrm{Ric}_g^{N,V}:=\mathrm{Ric}_g+ \mathrm{Hess}_g(V)+ \frac{1}{N-n} DV\otimes DV$$ with $g\in C^3$, $N\geq n$ and a weight $V\in C^3(M)$, which is assumed constant in case $n=N$ so that $\mathrm{Ric}_g^{n,V}=\mathrm{Ric}_g$. However, the given proof breaks down for continuous $F$, which means that the overall proof of equivalence already breaks down for $C^2$-metrics -- a regularity which is still very much considered classical in the field and for which the consensus is that results for smooth metrics should generally go through. As such it would be quite surprising if this degree of differentiability really were necessary. Moreover, the direction which becomes problematic is the step from Definition \ref{def:classicNEC} to Definition \ref{def:RobertNEC} which in the overall argument is relevant for going from the classical NEC to the $\mathrm{NC}_q^e(N)$ condition -- the direction which appears to be more easily shown to be robust under lowering regularity (cf. \cite{braun2024timelike}, but also the Riemannian case, where this direction has independently been established in \cite{KunzingerOhanyanVardabasso}  additionally to the most general equivalence of \cite{MondinoRyborz} mentioned above).\\

In this short note we show that McCann's proof can indeed be modified to show that Definitions \ref{def:classicNEC} and  \ref{def:RobertNEC} remain equivalent for $C^0$-tensor fields $F$ and thus can be applied to both $F=\mathrm{Ric}_g$ for a $C^2$-metric $g$ and $F=\mathrm{Ric}^{N,V}_g$ with $g\in C^2$, $N\geq n$ and a weight $V\in C^2(M)$ (cf. Remark \ref{ourrem28}). While this might not be the most general result possible, it confirms the common belief that -- maybe with some minor modifications to the proofs -- results established for smooth metrics can generally be recovered for $g\in C^2$.

\medskip

{\bf Notations and conventions } We want to emphasize that our signature convention for spacetimes is $(-,+,\dots,+)$ and as such unfortunately differs from the signature convention used in our main reference \cite{mccann2024synthetic}. 
Other definitions and conventions align with \cite{mccann2024synthetic}.

\section{Equivalence of Definitions \ref{def:classicNEC} and \ref{def:RobertNEC} for \texorpdfstring{$C^2$}{C2}-Lorentzian metrics}
\label{sec:nec-continuous-tensor-field-continuous-metric}

To establish the equivalence of Definitions \ref{def:classicNEC} and \ref{def:RobertNEC}, \cite{mccann2024synthetic} relies on the following slightly more general result, which \cite{mccann2024synthetic} formulates for $C^1$-tensor fields $F$ and smooth semi-Riemannian metrics and we are able to generalize to $C^0$-tensor fields $F$ and $C^0$ semi-Riemannian metrics $g$.

\begin{thm} \label{thm:null-bounds}
  Let $M$ be a smooth manifold with a $C^0$ semi-Riemannian metric $g$. Let \( F \) be a $(0,2)$-tensor field of regularity $C^{0}$ satisfying the null energy condition
  \[F(\v,\v) \geq 0  \quad \text{for all null vectors \( \v \)}.\]
  Then, for each compact subset \( Z \subseteq M \) there is a constant \( C_Z \in \mathbb{R} \) such that
  \begin{equation} \label{eq:null-bounds}
    F(\v,\v) \geq C_Z \cdot | g(\v,\v) | \tag{*}
  \end{equation}
  for all \( \v \in TZ:=TM|_Z \).
\end{thm}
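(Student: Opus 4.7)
The plan is to argue by contradiction. Suppose no such $C_Z$ exists, so for each $m \in \mathbb{N}$ there is $(p_m, v_m)$ with $p_m \in Z$, $|v_m|_h = 1$ (for an auxiliary Riemannian metric $h$), and $F(v_m, v_m) < -m |g(v_m, v_m)|$. After extracting a convergent subsequence $(p_m, v_m) \to (p_\infty, v_\infty)$ using compactness of the $h$-unit sphere bundle over $Z$, continuity combined with the violating inequality and NEC force $g_{p_\infty}(v_\infty, v_\infty) = 0$ (otherwise $|g(v_m, v_m)|$ would stay bounded away from $0$, contradicting the blow-up of the ratio) and subsequently $F_{p_\infty}(v_\infty, v_\infty) = 0$ (from $F_{p_\infty}(v_\infty, v_\infty) \geq 0$ by NEC and $F(v_m, v_m) \leq 0$ by the defining inequality).

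Next I would extract two pointwise algebraic consequences of NEC at the limit point, replacing differential computations in the $p$-variable by variational arguments in the fibre. First, since $F_{p_\infty}$ attains its minimum $0$ on the null cone at $v_\infty$, linearising NEC along null-preserving directions $u \in v_\infty^{\perp_g}$ (that is, $g_{p_\infty}(v_\infty, u) = 0$, making $v_\infty + \epsilon u$ null to leading order in $\epsilon$) and applying NEC to both $\pm u$ yields $F_{p_\infty}(v_\infty, u) = 0$ for all such $u$, whence $F_{p_\infty}(v_\infty, \cdot) = \alpha\, g_{p_\infty}(v_\infty, \cdot)$ for some $\alpha \in \mathbb{R}$. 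Second, setting $H := F - \alpha g$ (which inherits NEC and satisfies $H_{p_\infty}(v_\infty, \cdot) \equiv 0$) and constructing genuinely null perturbations $w = v_\infty + \epsilon \hat\delta + \epsilon^2 \eta$ with $\hat\delta \in v_\infty^{\perp_g}$ spacelike and $\eta$ chosen to cancel the $\epsilon^2$-term of $g(w,w)$, NEC for $F$ yields $H_{p_\infty}(\hat\delta, \hat\delta) \geq 0$ on the spacelike complement of $\mathrm{span}(v_\infty)$ inside $v_\infty^{\perp_g}$.

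To close the argument, I would work in a chart trivialising $TM$ near $p_\infty$ and set $\delta_m := v_m - v_\infty \to 0$. Because $F$ and $g$ are quadratic in the fibre variable, their Taylor expansion in $\delta_m$ is exact and base-point dependence only introduces error terms $E_m^F, E_m^g$ controlled by the $C^0$ moduli of continuity of $F$ and $g$:
\[
F(v_m, v_m) = 2\alpha\, g_{p_\infty}(v_\infty, \delta_m) + F_{p_\infty}(\delta_m, \delta_m) + E_m^F,\qquad g(v_m, v_m) = 2 g_{p_\infty}(v_\infty, \delta_m) + g_{p_\infty}(\delta_m, \delta_m) + E_m^g.
\]
A case split on the size of $g_{p_\infty}(v_\infty, \delta_m)$ would then produce the contradiction: in the generic regime where the linear term dominates, the ratio $F(v_m, v_m)/|g(v_m, v_m)|$ tends to $\pm \alpha$, contradicting the assumed blow-up; in the degenerate regime where $\delta_m/|\delta_m| \to \hat\delta \in v_\infty^{\perp_g}$ (note that the normalisation $|v_m|_h = |v_\infty|_h = 1$ forces $\hat\delta \perp_h v_\infty$ and hence $\hat\delta$ spacelike), the bound $H_{p_\infty}(\hat\delta, \hat\delta) \geq 0$ forces the leading $|\delta_m|^2$-term of $F(v_m, v_m)$ to be non-negative, again precluding a blow-up to $-\infty$.

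The main obstacle, and the essential difference from the $C^1$ setting, is the quantitative control of the base-point errors $E_m^{F,g}$: for $C^1$ tensors one would have $E_m^{F,g} = O(d(p_m, p_\infty))$, which fits neatly into a full Taylor expansion in both $v$ and $p$, whereas for $C^0$ tensors only $E_m^{F,g} \to 0$ is guaranteed, with no built-in comparison to $|\delta_m|$ or $|\delta_m|^2$. I anticipate the resolution to lie in a careful choice of the witnessing sequence, exploiting that the hypothesised violators cluster near $(p_\infty, v_\infty)$: one may pick $(p_m, v_m)$ with $d(p_m, p_\infty)$ decaying fast enough relative to $|\delta_m|$ that the moduli-of-continuity errors become negligible compared to the quadratic leading terms produced by the pointwise identities.
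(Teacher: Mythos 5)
Your setup (contradiction, normalization on a background unit sphere bundle, convergence to a null $(p_\infty,v_\infty)$) matches the paper, and your pointwise algebraic consequences of the NEC at the frozen base point $p_\infty$ (namely $F_{p_\infty}(v_\infty,\cdot)=\alpha\,g_{p_\infty}(v_\infty,\cdot)$ and non-negativity of $H=F-\alpha g$ on the degenerate hyperplane $v_\infty^{\perp_g}$) are essentially correct, modulo minor care in producing \emph{exactly} null perturbations (use a curve on the smooth null cone of $g_{p_\infty}$ rather than $v_\infty+\epsilon u$, which is only null to leading order). However, the closing step has a genuine gap, and you have identified its location yourself: the base-point errors $E_m^F=(F_{p_m}-F_{p_\infty})(v_m,v_m)$ and $E_m^g=(g_{p_m}-g_{p_\infty})(v_m,v_m)$ only satisfy $E_m^{F,g}\to 0$, with no rate, while the denominator $|g_{p_m}(v_m,v_m)|$ also tends to $0$; nothing prevents the ratio from being dominated by $E_m^F/|g_{p_m}(v_m,v_m)|$. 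Your proposed remedy -- picking the witnessing sequence so that $d(p_m,p_\infty)$ decays fast relative to $|\delta_m|$ -- is not available: the violators are dictated by the contradiction hypothesis, and you must take whatever exists at each level $m$. In the worst case every violator could have $\delta_m=0$ (i.e.\ $v_m=v_\infty$ with only $p_m$ varying), the nullness defect being driven entirely by the $p$-dependence of $g$; then your expansion in $\delta_m$ is vacuous and the frozen-base-point identities at $p_\infty$ give no control over $F_{p_m}(v_\infty,v_\infty)/|g_{p_m}(v_\infty,v_\infty)|$, which is precisely the quantity the theorem must bound.

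The missing idea is to never freeze the base point at $p_\infty$, but instead to compare $v_m$ with the null cone of $g_{p_m}$ \emph{in the same fiber}. This is what the paper does: it constructs a continuous \emph{vertical} vector field on $TM$, tangent to the unit sphere bundle and transverse to the null cone in each fiber, and uses its (explicitly solvable, fiberwise) flow to write $(p_m,v_m)=\Phi(t_m,(p_m,\hat v_m))$ with $(p_m,\hat v_m)\in S_0$ and $t_m\to 0$ (injectivity plus invariance of domain make $\Phi$ a homeomorphism onto a neighbourhood of $S_0$). Since the flow stays in the fiber over $p_m$, the functions $t\mapsto F_{p_m}(\hat v_m(t),\hat v_m(t))$ and $t\mapsto g_{p_m}(\hat v_m(t),\hat v_m(t))$ are smooth, the NEC can be applied \emph{exactly} at $(p_m,\hat v_m)$ (same base point, so no modulus-of-continuity error enters the expansion), and the Taylor expansion is purely in $t$; continuity of $F$ and $g$ in $p$ is needed only to pass to the limit in the resulting jointly continuous derivative expressions, with $(g\circ\gamma_m)'(0)\to 4\langle g_{p_\infty}v_\infty,g_{p_\infty}v_\infty\rangle\neq 0$ keeping the denominator away from zero. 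If you wish to salvage your route, you would need an analogue of this fiberwise projection (e.g.\ establishing your algebraic identities at null vectors of $g_{p_m}$ near $v_\infty$, uniformly in $m$), which effectively reproduces the paper's argument.
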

Before going into the details of the proof, let us outline the general argument. The proof of Theorem \ref{thm:null-bounds} in \cite{mccann2024synthetic} essentially observes that the only way for the quotient $ \frac{F(\v,\v)}{|g(\v,\v)|} $ to diverge to $-\infty$ would be for $\v$ to become null (but non-zero) and then uses a Taylor expansion of the tensor fields \(F\) and \(g\) around the null bundle $L=\{\v\in TM:\,g(\v,\v)=0,\,\v\neq 0\}\subset TM$ to estimate that in reality the liminf of the quotient must always remain bounded from below even as one approaches the null bundle: Derivatives of $g(.,.):TM\to \R$ transverse to $L$ cannot vanish (which follows from non-degeneracy of $g$), so the denominator goes to zero linearly, and from the imposed non-negativity of $F$ on $L$ one can show that the negative part of the numerator must approach zero at least linearly as well. While it seems reasonable 
that the transverse direction used in the Taylor argument might be  picked in such a way that the base point $p$ remains fixed, 
in McCann's very general 
construction starting from an {\em arbitrary} Riemannian background metric this transverse derivative corresponds to a vector field $X\in \mathfrak{X}(TM)$ for which generally $d\pi \circ X\neq 0$ (where $\pi$ is the usual projection $\pi:TM \to M$). If this can be remedied, one may be able to use the fact that any tensor field is automatically continuously differentiable in vertical directions by multilinearity to recover the proof for $C^0$ tensor fields $F$. In principle the same trick also removes the need for differentiability of $g$ as long as one is sufficiently careful in the construction of a suitable vector field $X$ and in analyzing its flow, which both become slightly more challenging without differentiability of $g$. 

\begin{proof}[Proof of Theorem \ref{thm:null-bounds}.] For any compact $Z\subseteq M$ we set 
$$  C_Z 
      := \inf_{\v \in TM|_Z:\,g(\v,\v)\neq 0} \frac{F(\v, \v)}{g(\v, \v)}.$$
If $C_Z>-\infty$ this is clearly a suitable constant for \eqref{eq:null-bounds}. 

We first remark that if $g$ is Riemannian, there are no 
null vectors. Thus every $C^0$-tensor field satisfies the null energy condition and also \[ 
      C_Z 
      = \inf_{\v \in TM|_Z:\,\v\neq 0} \frac{F(\v, \v)}{g(\v, \v)} 
      = \inf_{\v \in  TM|_Z\,:\,g(\v,\v)=1} F(\v, \v) > -\infty.
    \]
    automatically. So let us assume that $g$ has at least one negative eigenvalue. 

   We next reduce the proof to the case where $M=\R^n$. Since $Z$ is compact, we can cover it by finitely many compact subsets $Z_i,\,i=1,\dots, N$, each of which is still contained in a coordinate neighbourhood $U_i\cong \R^n$. If we can show that for each compact $Z_i\subseteq U_i$ the constant \(C_{Z_i} \) defined as above is in \(\R\), then 
    \[
      C_Z = \min_{1 \leq i \leq N} C_{Z_i}\in \R
    \]
    is the desired constant for \(Z\). Thus we may, without loss of generality, assume that we have fixed global smooth coordinates in which $M=\R^n$ and that $Z$ is a compact subset of $\R^n$. This simplifies the setting and instead of choosing an arbitrary complete\footnote{Completeness is, however, not actually used in either McCann's or our proof.} 
    background Riemannian metric on $M$ as in \cite{mccann2024synthetic} we may simply use the Euclidean metric in the given coordinates as our background Riemannian metric, denoting it with $\langle ., .\rangle$ and the associated norm with $\|.\|$. To obtain a (then also complete) Riemannian metric on $TM$ \cite{mccann2024synthetic}  uses the Sasaki-Metric obtained from the arbitrarily chosen Riemannian background metric on $TM$. For us $TM=T\R^n=\R^n\times \R^n=\R^{2n}$ trivializes globally in its induced smooth coordinates, so we can simply equip it with the Euclidean metric in these coordinates, which we by a slight abuse of notation again denote by $\langle ., .\rangle$. 

As in \cite{mccann2024synthetic}, due to the 2-homogeneity of the bilinear forms \( F \) and \( g \), it suffices to 
consider the infimum over 
all \emph{unit} (with respect to the background Riemannian metric) vectors \( \mathbf{v}=(p,v) \in TM|_Z=Z \times \R^n \) with $g(\v,\v) \neq 0$ in the definition of $C_Z$. Since $M=\R^n$ and our background Riemannian metric is just the Euclidean one, the unit sphere bundle becomes $\R^n\times \S^{n-1}$, where $\S^{n-1}\subseteq \R^n$ is the standard round sphere. In particular this unit sphere bundle is globally trivial and a smooth $(2n-1)$-dimensional submanifold of $TM=\R^{2n}$. We set $S:=\R^n\times \S^{n-1}$ and decompose it into three disjoint subsets:
\[
      S = S_{-} \cup S_{0} \cup S_{+},
\]
corresponding to timelike, null, and spacelike vectors, respectively. Clearly, this decomposition depends on the $C^0$ semi-Riemannian metric $g$. We note that 
\[ 
S_0=\{(p,v)\in \R^n\times \S^{n-1}: g_p(v, v) = 0 \}
\]
is a $(2n-2)$-dimensional topological submanifold of $TM=\R^n \times \R^n$: Since $g$ is not differentiable we cannot argue via the tangent spaces to the null bundle and the unit-sphere bundle spanning $TM$ as in \cite{mccann2024synthetic}. Instead, we will show the desired property more directly. For any $p\in M=\R^n$, $g_p$ is a symmetric bilinear form on $\R^n$, hence can be orthogonally diagonalized, i.e. there exists an orthogonal matrix $U_p$ such that $U_p^T g_p U_p= \eta_{(k,n-k)} $, where $\eta_{(k,n-k)}$ is the standard semi-Riemannian inner product of signature $(k,n-k)$ on $\R^n$. Since $U_p$ consists of the eigenvectors of $g_p$, it depends continuously on $p$. 
This gives a {\em continuous} global change of coordinates $\psi: (p,v)\mapsto (p,U_pv)$ for $TM=\R^{2n}$ with continuous inverse $\psi^{-1}: (p,v')\mapsto (p, U_p^T v')$. For $(p,v)\in \R^n\times \R^n$ and $v':=U_pv$ we have $\|v\|=1 \iff \|v'\|=1$ since $U_p$ is orthogonal and $g_p(v,v)=0 \iff \eta_{(k,n-k)}(v',v')=0$ since $U_p$ diagonalizes $g_p$, 
so in the new coordinates $(p,v')$ the subset $S_0$ becomes just 
\begin{equation}\label{eq:psiS0}
    \psi(S_0)=\{(p,v')\in \R^n\times \R^n: \|v'\|=1 \text{ and } \eta_{(k,n-k)}(v',v')=0\},
\end{equation} which is known to be a smooth $(2n-2)$-dimensional submanifold of $\R^{n}\times \R^n$. Hence $S_0$ 
is a $(2n-2)$-dimensional topological submanifold of $TM$.\\

After this set-up, let us continue with the proof. As in \cite{mccann2024synthetic} we assume for the sake of contradiction that the theorem's assertion is false, i.e., that $C_Z = -\infty$.
Then 
there exists a sequence \( (p_{\seqIndex},v_{\seqIndex})_{\seqIndex \in\mathbb{N}} \) in $S|_Z:=Z\times \S^{n-1}\subset S$ with $g_{p_{\seqIndex}}(v_{\seqIndex},v_{\seqIndex}) \neq 0$ such that
\begin{equation} \label{eq:contradiction-assumption}
  r(p_{\seqIndex},v_{\seqIndex}) := \frac{F_{p_{\seqIndex}}(v_{\seqIndex},v_{\seqIndex})}{\left|g_{p_{\seqIndex}}(v_{\seqIndex}, v_{\seqIndex})\right|} \to -\infty.
\end{equation}
By compactness of $S|_Z$ a subsequence of \( (p_{\seqIndex},v_{\seqIndex}) \), which we again denote by \( (p_{\seqIndex},v_{\seqIndex}) \),  must converge to some limit \( (p_{\infty},v_{\infty}) \in S|_Z \). Since \( r(p_{\seqIndex},v_{\seqIndex}) \) diverges to \( -\infty \) and $F$ is bounded on $S|_Z$, this forces \( g_{p_{\seqIndex}}(v_{\seqIndex}, v_{\seqIndex}) \to 0 \), implying that \( (p_{\infty},v_{\infty})\in S_0|_Z \).\\

Our goal is to essentially think of the $\R^n$-component $p$ as a {\em parameter } instead of a coordinate as we approach $(p_{\infty},v_{\infty})$. This will be helpful because these directions are the only source for the non-smoothness: For any {\em fixed} $p\in M=\R^n$, $S_0|_p$ is a smooth $(n-2)$-dimensional submanifold of both $S|_p=\{p\}\times \S^{n-1}$ and  $T_pM=\{p\}\times \R^n$ and both $g_p$ and $F_p$ are smooth on $T_pM\times T_pM$ by bilinearity. Instead of the normal/Fermi coordinates around $S_0$ (based on the Sasaki metric on $TM$ corresponding to the arbitrarily chosen Riemannian background metric on $M$) that \cite{mccann2024synthetic} uses to get a handle on the distance to $S_0$ (which will potentially mix vertical and horizontal components), we will use the flow of a {\em continuous vertical vector field} $X\in \mathfrak{X}(TM)$, i.e. $X=0+\sum_{i=1}^n X^i \frac{\partial}{\partial v^i}$ for continuous $X^i:TM\to \R$, 
such that $X|_{S}$  
 remains tangent to the unit sphere bundle $S$ but such that $X_{(p,v)}$ is non-tangent to the (smooth) fiber $L_p=\{(p,v)\in T_pM=\{p\}\times \R^n: g_p(v,v)=0\}$ of the null cone  of the semi-Riemannian metric $g$ for any $(p,v)\in S_0$.

We can give a suitable $X$ very explicitly by making the following ansatz: We start with $$Y_{(p,v)}:=\displaystyle\sum_{i=1}^{n} \dfrac{\partial g}{\partial v^i} \cdot \left.\dfrac{\partial}{\partial v^i}\right|_{(p,v)}=2 \displaystyle\sum_{i=1}^{n}\left(\sum_{j=1}^{n} g_{ij}(p) v^j\right) \left.\dfrac{\partial}{\partial v^i}\right|_{(p,v)},$$ which is equal to the vertical projection of $$\gradRiem g_{(p,v)} = \sum_{i=1}^{n}\left(\sum_{j,k=1}^{n} \frac{\partial g_{jk}(p)}{\partial p^i} \cdot v^jv^k \right)\left.\frac{\partial}{\partial p^i}\right|_{(p,v)}
    + \sum_{i=1}^{n}\left(2\sum_{j=1}^{n} g_{ij}(p) v^j \right) \left.\frac{\partial}{\partial v^i}\right|_{(p,v)}$$  if $g \in C^1$, but is also perfectly well-behaved for continuous $g$. We then project onto $TS$ to obtain
\begin{align} \label{eq:vector-field}
  X_{(p,v)} &:= Y_{(p,v)} - \langle Y_{(p,v)} ,n_{(p,v)} \rangle  \cdot n_{(p,v)} \notag\\
  &= 2 \displaystyle\sum_{i=1}^{n}\left(\sum_{j=1}^{n} g_{ij}(p) v^j - g_p(v,v) v^i\right) \left.\dfrac{\partial}{\partial v^i}\right|_{(p,v)},
\end{align}
where $n_{(p,v)}:= v^i \dfrac{\partial}{\partial v^i} $ is the unit normal to $S$ at $(p,v)$ for $(p,v)\in S\subset TM$. This vector field is clearly continuous and vertical on $TM$ and $X|_{S}$ is tangent to $S$ by construction. It remains to check
\textit{non-tangency to $L_p$} for any $(p,v)\in S_0$. Since $T_vL_p = \{ w \in T_pM \mid g_p(v, w) = 0 \}$ it suffices to show that $g_p(v, X_{(p,v)}) \neq 0$. We compute 
$$ g_p(v, X_{(p,v)}) = 2 \sum_{i,j, k} g_{ij}(p) g_{ki}(p)  v^j v^k  = 2 \, \langle g_p\cdot v, g_p\cdot v\rangle \neq 0,$$
where $g_p\cdot v:=(\sum_j g_{ij}(p) v^j)_{i=1,\dots,n}$ and we used non-degeneracy of $g$ and that $v\neq 0$.\\

We now wish to define a ``projection'' of 
points $(p,v)$ in $S$ near $S_0$ onto $S_0$ by using the integral curves of $X$. 
Since $X$ is overall merely continuous we cannot directly use standard results such as flowout coordinates. Instead we will solve the ODE for the integral curves directly: 

Because $X$ is vertical, any integral curve $\gamma: I\to S=\R^n\times \S^{n-1}$ of $X$ starting in some $(p_0,v_0)\in S_0\subset \R^n\times \S^{n-1}$ must remain in the same fiber, i.e. $\gamma(t)=(p_0,v(t))$ for all $t$, and $v:I\to \S^{n-1}\subset \R^n$ must solve the fiberwise ODE:
\begin{equation} \label{eq:nec-3-integral-curves}
v: I \to S\, ,\ 
  \left\{\begin{array}{ll}
v(0) = v_0, \\
v'(t) = 2\, g_{p_0}\cdot v(t) - 2\,g_{p_0}(v(t),v(t)) \cdot v(t).
\end{array}\right.
\end{equation}

For fixed $p_0$, the right-hand side is smooth in $v$, so by Picard-Lindelöf, for each $(p_0, v_0) \in S_0$, there exists a unique solution $\gamma_{p_0}(t) = (p_0, v(t))$ defined on a maximal interval $I_{p_0,v_0} \ni 0$. We notice that equation \eqref{eq:nec-3-integral-curves} is almost linear except for the projection term to stay on the sphere. Since the linear equation is solved by $t\mapsto e^{2t\, g_{p_0}}\cdot v_0$ for all $t\in \R$, where $g_{p_0}$ is the matrix $(g_{ij}(p_0))_{i,j=1,\dots,n}$ and $e^{2tg}$ is the matrix exponential of $2tg$, we make the following ansatz for the solution of (\ref{eq:nec-3-integral-curves}) and the associated flow map
\begin{equation} \label{eq:flow}
  \Phi: \R \times S_0 \to S, \qquad \Phi(t, (p_0, v_0)) = (p_0,v(t)) = \begin{pmatrix} p_0,\, \dfrac{e^{2t \cdot g_{p_0}} v_0}{\|e^{2t \cdot g_{p_0}} v_0\|} \end{pmatrix},
\end{equation}
where $\|\cdot\|$ is the Euclidean norm. It is straightforward to check that this indeed solves \eqref{eq:nec-3-integral-curves} (see Appendix \ref{secA1}, Lemma \ref{thm:flow-map-solves-ode}). Clearly $\Phi$ is globally defined and continuous on $\mathbb{R} \times S_0$. Furthermore, one can explicitly show that $\Phi$ is injective (see Appendix \ref{secA1}, Lemma \ref{thm:flow-map-is-injective}).

\medskip

Since \(\mathbb{R} \times S_0\) and \(S\) are both \((2n-1)\)-dimensional topological manifolds and \(\Phi : \mathbb{R} \times S_0\to S \) is continuous and injective, invariance of domain implies that \(V:=\Phi(\R\times S_0) \subseteq S \) is open in $S$ and $\Phi: \R \times S_0 \to V$ is a \emph{homeomorphism}. Since $S_0\subset V$, the points \((p_{\seqIndex}, v_{\seqIndex})\) of the sequence converging to $(p_\infty,v_\infty)\in S_0$ along which $r$ diverges to $-\infty$ lie in $V$ for all large enough $m$. Define $$
(t_{\seqIndex},(\hat{p}_{\seqIndex},\hat{v}_{\seqIndex})):= \Phi^{-1}((p_{\seqIndex}, v_{\seqIndex})) \in \R \times S_0.$$
Since $p$ is constant along the integral curves, we immediately get $\hat{p}_m=p_m$ for all $m$. Further, $(t_\infty, (\hat{p}_\infty,\hat{v}_\infty))=(0,(p_\infty,v_\infty))$ since $(p_\infty,v_\infty)\in S_0$.

Continuity of \(\Phi^{-1}\) implies \(t_{\seqIndex} \to 0\) and \((p_{\seqIndex}, \hat{v}_{\seqIndex}) \to (p_\infty, v_\infty)\) as \(m \to \infty\).

\begin{figure}[H] \centering
  
\tikzset {_yhrqgvb0m/.code = {\pgfsetadditionalshadetransform{ \pgftransformshift{\pgfpoint{89.1 bp } { -128.7 bp }  }  \pgftransformscale{1.32 }  }}}
\pgfdeclareradialshading{_t82q9yf8r}{\pgfpoint{-72bp}{104bp}}{rgb(0bp)=(1,1,1);
rgb(0bp)=(1,1,1);
rgb(25bp)=(0.74,0.73,0.73);
rgb(400bp)=(0.74,0.73,0.73)}
\tikzset{_ngv7cgngq/.code = {\pgfsetadditionalshadetransform{\pgftransformshift{\pgfpoint{89.1 bp } { -128.7 bp }  }  \pgftransformscale{1.32 } }}}
\pgfdeclareradialshading{_jimnshe3y} { \pgfpoint{-72bp} {104bp}} {color(0bp)=(transparent!0);
color(0bp)=(transparent!0);
color(25bp)=(transparent!90);
color(400bp)=(transparent!90)} 
\pgfdeclarefading{_six8e7ykn}{\tikz \fill[shading=_jimnshe3y,_ngv7cgngq] (0,0) rectangle (50bp,50bp); } 
\tikzset{every picture/.style={line width=0.75pt}} 

\begin{tikzpicture}[x=0.75pt,y=0.75pt,yscale=-1,xscale=1]
  \begin{scope}
    \clip (50,0) rectangle (300,150);


\path  [shading=_t82q9yf8r,_yhrqgvb0m,path fading= _six8e7ykn ,fading transform={xshift=2}] (42.68,149.7) .. controls (42.68,81.6) and (97.89,26.4) .. (165.98,26.4) .. controls (234.08,26.4) and (289.29,81.6) .. (289.29,149.7) .. controls (289.29,217.8) and (234.08,273) .. (165.98,273) .. controls (97.89,273) and (42.68,217.8) .. (42.68,149.7) -- cycle ; 
 \draw   (42.68,149.7) .. controls (42.68,81.6) and (97.89,26.4) .. (165.98,26.4) .. controls (234.08,26.4) and (289.29,81.6) .. (289.29,149.7) .. controls (289.29,217.8) and (234.08,273) .. (165.98,273) .. controls (97.89,273) and (42.68,217.8) .. (42.68,149.7) -- cycle ; 

\draw    (159,42.28) .. controls (204.43,40.29) and (236.33,66) .. (227.6,76.28) ;
\draw [fill={rgb, 255:red, 228; green, 41; blue, 41 }  ,fill opacity=1 ]   (50.29,41) -- (273.41,255.2) ;
\draw    (275.29,44.6) -- (61.54,257) ;
\draw [line width=1.5]    (77.88,65.37) .. controls (110.37,89.67) and (230.37,86.07) .. (252.87,65.37) ;
\draw [line width=1.5]    (79.75,236.37) .. controls (112.25,260.67) and (232.25,257.07) .. (254.75,236.37) ;
\draw  [dash pattern={on 4.5pt off 4.5pt}]  (79.75,236.37) .. controls (110.37,212.07) and (222.87,210.27) .. (254.75,236.37) ;
\draw  [fill={rgb, 255:red, 0; green, 0; blue, 0 }  ,fill opacity=1 ] (220,63.28) .. controls (220,61.84) and (221.16,60.68) .. (222.6,60.68) .. controls (224.04,60.68) and (225.2,61.84) .. (225.2,63.28) .. controls (225.2,64.72) and (224.04,65.88) .. (222.6,65.88) .. controls (221.16,65.88) and (220,64.72) .. (220,63.28) -- cycle ;
\draw  [fill={rgb, 255:red, 0; green, 0; blue, 0 }  ,fill opacity=1 ] (224,69.28) .. controls (224,67.84) and (225.16,66.68) .. (226.6,66.68) .. controls (228.04,66.68) and (229.2,67.84) .. (229.2,69.28) .. controls (229.2,70.72) and (228.04,71.88) .. (226.6,71.88) .. controls (225.16,71.88) and (224,70.72) .. (224,69.28) -- cycle ;
\draw [color={rgb, 255:red, 65; green, 117; blue, 5 }  ,draw opacity=1 ]   (161.6,42.28) .. controls (170.43,56.29) and (167.43,75.29) .. (164.6,82.28) ;
\draw [color={rgb, 255:red, 65; green, 117; blue, 5 }  ,draw opacity=1 ]   (182.6,43.28) .. controls (189.83,51.29) and (191.2,71.6) .. (189.6,81.28) ;
\draw [color={rgb, 255:red, 65; green, 117; blue, 5 }  ,draw opacity=1 ]   (201.6,49.68) .. controls (210.2,55) and (209.2,75.6) .. (207.73,80) ;
\draw  [fill={rgb, 255:red, 74; green, 144; blue, 226 }  ,fill opacity=1 ] (162,82.28) .. controls (162,80.84) and (163.16,79.68) .. (164.6,79.68) .. controls (166.04,79.68) and (167.2,80.84) .. (167.2,82.28) .. controls (167.2,83.72) and (166.04,84.88) .. (164.6,84.88) .. controls (163.16,84.88) and (162,83.72) .. (162,82.28) -- cycle ;
\draw  [fill={rgb, 255:red, 74; green, 144; blue, 226 }  ,fill opacity=1 ] (187,81.28) .. controls (187,79.84) and (188.16,78.68) .. (189.6,78.68) .. controls (191.04,78.68) and (192.2,79.84) .. (192.2,81.28) .. controls (192.2,82.72) and (191.04,83.88) .. (189.6,83.88) .. controls (188.16,83.88) and (187,82.72) .. (187,81.28) -- cycle ;
\draw  [fill={rgb, 255:red, 74; green, 144; blue, 226 }  ,fill opacity=1 ] (205.13,80) .. controls (205.13,78.56) and (206.3,77.4) .. (207.73,77.4) .. controls (209.17,77.4) and (210.33,78.56) .. (210.33,80) .. controls (210.33,81.44) and (209.17,82.6) .. (207.73,82.6) .. controls (206.3,82.6) and (205.13,81.44) .. (205.13,80) -- cycle ;
\draw  [fill={rgb, 255:red, 74; green, 144; blue, 226 }  ,fill opacity=1 ] (216,78.28) .. controls (216,76.84) and (217.16,75.68) .. (218.6,75.68) .. controls (220.04,75.68) and (221.2,76.84) .. (221.2,78.28) .. controls (221.2,79.72) and (220.04,80.88) .. (218.6,80.88) .. controls (217.16,80.88) and (216,79.72) .. (216,78.28) -- cycle ;
\draw  [fill={rgb, 255:red, 74; green, 144; blue, 226 }  ,fill opacity=1 ] (219,78.28) .. controls (219,76.84) and (220.16,75.68) .. (221.6,75.68) .. controls (223.04,75.68) and (224.2,76.84) .. (224.2,78.28) .. controls (224.2,79.72) and (223.04,80.88) .. (221.6,80.88) .. controls (220.16,80.88) and (219,79.72) .. (219,78.28) -- cycle ;
\draw  [fill={rgb, 255:red, 74; green, 144; blue, 226 }  ,fill opacity=1 ] (223,77.28) .. controls (223,75.84) and (224.16,74.68) .. (225.6,74.68) .. controls (227.04,74.68) and (228.2,75.84) .. (228.2,77.28) .. controls (228.2,78.72) and (227.04,79.88) .. (225.6,79.88) .. controls (224.16,79.88) and (223,78.72) .. (223,77.28) -- cycle ;
\draw  [fill={rgb, 255:red, 208; green, 2; blue, 27 }  ,fill opacity=1 ] (225,76.28) .. controls (225,74.84) and (226.16,73.68) .. (227.6,73.68) .. controls (229.04,73.68) and (230.2,74.84) .. (230.2,76.28) .. controls (230.2,77.72) and (229.04,78.88) .. (227.6,78.88) .. controls (226.16,78.88) and (225,77.72) .. (225,76.28) -- cycle ;
\draw [color={rgb, 255:red, 65; green, 117; blue, 5 }  ,draw opacity=1 ]   (213.6,54.28) .. controls (218.2,59.6) and (221.2,69.6) .. (219,78.28) ;
\draw  [fill={rgb, 255:red, 0; green, 0; blue, 0 }  ,fill opacity=1 ] (159,42.28) .. controls (159,40.84) and (160.16,39.68) .. (161.6,39.68) .. controls (163.04,39.68) and (164.2,40.84) .. (164.2,42.28) .. controls (164.2,43.72) and (163.04,44.88) .. (161.6,44.88) .. controls (160.16,44.88) and (159,43.72) .. (159,42.28) -- cycle ;
\draw  [fill={rgb, 255:red, 0; green, 0; blue, 0 }  ,fill opacity=1 ] (180,43.28) .. controls (180,41.84) and (181.16,40.68) .. (182.6,40.68) .. controls (184.04,40.68) and (185.2,41.84) .. (185.2,43.28) .. controls (185.2,44.72) and (184.04,45.88) .. (182.6,45.88) .. controls (181.16,45.88) and (180,44.72) .. (180,43.28) -- cycle ;
\draw  [fill={rgb, 255:red, 0; green, 0; blue, 0 }  ,fill opacity=1 ] (199,49.28) .. controls (199,47.84) and (200.16,46.68) .. (201.6,46.68) .. controls (203.04,46.68) and (204.2,47.84) .. (204.2,49.28) .. controls (204.2,50.72) and (203.04,51.88) .. (201.6,51.88) .. controls (200.16,51.88) and (199,50.72) .. (199,49.28) -- cycle ;
\draw  [fill={rgb, 255:red, 0; green, 0; blue, 0 }  ,fill opacity=1 ] (211,54.28) .. controls (211,52.84) and (212.16,51.68) .. (213.6,51.68) .. controls (215.04,51.68) and (216.2,52.84) .. (216.2,54.28) .. controls (216.2,55.72) and (215.04,56.88) .. (213.6,56.88) .. controls (212.16,56.88) and (211,55.72) .. (211,54.28) -- cycle ;

\draw (227.6,80.28) node [anchor=north west][inner sep=0.75pt]  [font=\scriptsize,color={rgb, 255:red, 208; green, 2; blue, 27 }  ,opacity=1 ] [align=left] {$\displaystyle ( p_{\infty } ,v_{\infty })$};
\draw (115,37) node [anchor=north west][inner sep=0.75pt]  [font=\scriptsize] [align=left] {$\displaystyle ( p_{m} ,v_{m})$};
\draw (122,87.28) node [anchor=north west][inner sep=0.75pt]  [font=\scriptsize,color={rgb, 255:red, 74; green, 144; blue, 226 }  ,opacity=1 ] [align=left] {$\displaystyle \left(p_{\seqIndex} ,\hat{v}_{m}\right)$};
\draw (150,54) node [anchor=north west][inner sep=0.75pt]  [font=\footnotesize,color={rgb, 255:red, 65; green, 117; blue, 5 }  ,opacity=1 ] [align=left] {$\displaystyle \gamma _{m}$};
\end{scope}
\end{tikzpicture}
  \caption{Approaching $\left(p_{\infty},v_{\infty}\right)$ along the null projections $(p_{\seqIndex},\hat{v}_{\seqIndex})$ at $S_0$ (for illustrative purposes $p_{\seqIndex} = p_{\infty}$).}
  \label{fig:unit-sphere-bundle-taylor-approximation-flowout}
\end{figure}
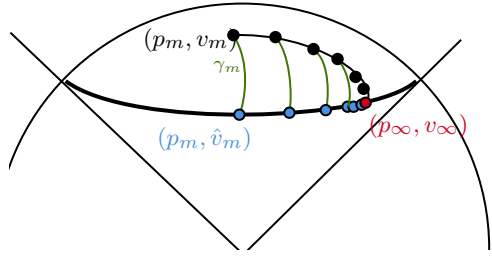

Denote the integral curve starting in $(p_m,\hat{v}_m)$ by $\gamma_m$, i.e., \begin{equation}\label{eq:vhateq}
   \gamma_{\seqIndex}(t) := \Phi(t,(p_m,\hat{v}_m))=(p_m, \hat{v}_m(t)) \text{ with } \hat{v}_m(t):=\dfrac{e^{2t \cdot g_{p_m}} \hat{v}_m}{\|e^{2t \cdot g_{p_m}} \hat{v}_m\|} 
\end{equation}
as in \eqref{eq:flow}. Then both $F\circ \gamma_m:=F_{p_m}(\hat{v}_m(.),\hat{v}_m(.)) :\R \to \R$ and $g\circ \gamma_m:=g_{p_m}(\hat{v}_m(.),\hat{v}_m(.)) :\R \to \R$ are {\em smooth} and we can Taylor expand
\begin{align}\frac{F(\gamma_{\seqIndex}(t_{\seqIndex}))}{\left|g(\gamma_{\seqIndex}(t_{\seqIndex}))\right|} &=  \frac{(F \circ \gamma_{\seqIndex})(0) + t_{\seqIndex} \cdot  (F \circ \gamma_{\seqIndex})^{\prime}(0) + \frac{t_{\seqIndex}^2}{2} (F \circ \gamma_{\seqIndex})^{\prime\prime}(\xi_{\seqIndex})}
   {\left|\, (g \circ \gamma_{\seqIndex})(0) + t_{\seqIndex} \cdot  (g \circ \gamma_{\seqIndex})^{\prime}(0) + \frac{t_{\seqIndex}^2}{2!} (g \circ \gamma_{\seqIndex})^{\prime\prime}(\nu_{\seqIndex})\,\right|} \nonumber \\                                
 & \geq \frac{t_{\seqIndex} \cdot (F \circ \gamma_{\seqIndex})^{\prime}(0) + \frac{t_{\seqIndex}^2}{2} (F \circ \gamma_{\seqIndex})^{\prime\prime}(\xi_{\seqIndex})}
 {\left|  t_{\seqIndex} \cdot  (g \circ \gamma_{\seqIndex})^{\prime}(0) + \frac{t_{\seqIndex}^2}{2} (g \circ \gamma_{\seqIndex})^{\prime\prime}(\nu_{\seqIndex})\,\right|} \nonumber  \\
 & \geq - \Bigg| \frac{(F \circ \gamma_{\seqIndex})^{\prime}(0) + \frac{t_{\seqIndex}}{2} (F \circ \gamma_{\seqIndex})^{\prime\prime}(\xi_{\seqIndex})}
 { (g \circ \gamma_{\seqIndex})^{\prime}(0) + \frac{t_{\seqIndex}}{2} (g \circ \gamma_{\seqIndex})^{\prime\prime}(\nu_{\seqIndex})} \Bigg|
\end{align}
for suitable $\xi_{\seqIndex},\nu_{\seqIndex} \in [0,t_{\seqIndex})$, where we used $(p_\infty,v_\infty)\in S_0$ and $F$ satisfying the null energy condition in the first inequality.

Since $\hat{v}_m'(t)$ and $ \hat{v}_m''(t)$ are continuous functions of $t$ and $(p_m,v_m)$ (which follows either from the ODE \eqref{eq:nec-3-integral-curves} or the explicit form \eqref{eq:vhateq}) and $F$ is continuous, also $(F\circ \gamma_m)'(t)=2 F_{p_m}(\hat{v}_m(t),\hat{v}_m'(t))$ and $(F\circ \gamma_m)''(t)=2 F_{p_m}(\hat{v}'_m(t),\hat{v}_m'(t))+2 F_{p_m}(\hat{v}_m(t),\hat{v}_m''(t))$ are jointly continuous in $t$ {\em and} $(p_m,v_m)$. The same holds for $(g\circ \gamma_m)'(t)$ and $(g\circ \gamma_m)''(t)$. Since $t_m$ converges to zero as $m \to \infty$, also $\nu_m\to 0$  we obtain
\begin{align}
    \lim_{m\to \infty} (g \circ \gamma_{\seqIndex})^{\prime}(0) + \frac{t_{\seqIndex}}{2} (g \circ \gamma_{\seqIndex})^{\prime\prime}(\nu_{\seqIndex}) &= (g\circ \gamma_\infty)'(0)= 2 g_{p_\infty}(v_\infty, \gamma_\infty'(0)) \nonumber \\
    & =4 g_{p_\infty}(v_\infty, g_{p_\infty}\cdot v_\infty)=4 \langle g_{p_\infty}\cdot v_\infty, g_{p_\infty}\cdot v_\infty \rangle
\end{align} 
since  $\gamma_\infty'(0)= 2\,g_{p_\infty}\cdot v_\infty$ by the ODE \eqref{eq:nec-3-integral-curves} or the explicit form \eqref{eq:vhateq}. This is always non-zero since $v_\infty \in S_0$, so in particular $v_\infty\neq 0$, and $g$ is non-degenerate. Because the denominator does not converge to zero, we obtain
$$\lim_{m\to \infty} r(p_m,v_m)= \lim_{\seqIndex \to \infty} \frac{F(\gamma_{\seqIndex}(t_{\seqIndex}))}{\left|g(\gamma_{\seqIndex}(t_{\seqIndex}))\right|}\geq - \Bigg|  \frac{(F\circ\gamma_\infty)'(0)}{4 \langle g_{p_\infty}\cdot v_\infty, g_{p_\infty}\cdot v_\infty \rangle} \Bigg| \in \R$$
for the ratio. This contradicts the assumption that $ r(p_m,v_m)\to -\infty$ as $m\to \infty$ and finishes the proof. 
\end{proof}

Using Theorem \ref{thm:null-bounds}, we also recover \cite[Cor.\ 27]{mccann2024synthetic} and \cite[Rem.\ 28]{mccann2024synthetic} in this generality with exactly the same proof as in \cite{mccann2024synthetic}.

\begin{cor}\label{cor:Necvstl}
     Let $M$ be a smooth manifold with a $C^0$ semi-Riemannian metric $g$ and let \( F \) be a $C^{0}$-tensor field.  Then 
  \[F(\v,\v) \geq 0  \quad \text{for all null vectors \( \v \)}.\]
  if and only if for each compact subset \( Z \subseteq M \) there is a constant \( C^T_Z=-C_Z \in \mathbb{R} \) such that
  \begin{equation} \label{eq:vartlbound}
    F(\v,\v) \geq C^T_Z \cdot g(\v,\v)  
  \end{equation}
  for all {\em timelike} \( \v \in TZ \).
\end{cor}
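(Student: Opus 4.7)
The plan is to derive both implications directly from Theorem \ref{thm:null-bounds} together with a simple continuity/approximation argument, with no new technical input required.

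For the forward implication, suppose $F(\v,\v) \geq 0$ for all null vectors. Then Theorem \ref{thm:null-bounds} applies and yields, for each compact $Z \subseteq M$, a constant $C_Z \in \R$ such that $F(\v,\v) \geq C_Z \cdot |g(\v,\v)|$ for all $\v \in TZ$. Since our signature convention is $(-,+,\dots,+)$, any timelike $\v$ satisfies $g(\v,\v) < 0$, so $|g(\v,\v)| = -g(\v,\v)$. Substituting and setting $C_Z^T := -C_Z$, we obtain $F(\v,\v) \geq C_Z^T \cdot g(\v,\v)$ for every timelike $\v \in TZ$, as desired.

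For the converse, let $\v \in T_pM$ be an arbitrary null vector. If $\v = 0$ the conclusion $F(\v,\v) = 0 \geq 0$ is trivial, so assume $\v \neq 0$. Choose any compact neighborhood $Z$ of $p$ and let $C_Z^T$ be the constant provided by the hypothesis. Since $g$ has at least one negative eigenvalue (otherwise there are no nonzero null vectors and the statement is vacuous), the timelike cone at $p$ is a nonempty open subset of $T_pM$ whose boundary consists of null vectors. Hence we can find a sequence of timelike vectors $\v_k \in T_pM$ with $\v_k \to \v$. By assumption $F_p(\v_k,\v_k) \geq C_Z^T \cdot g_p(\v_k,\v_k)$ for all $k$, and passing to the limit using continuity of $F$ and $g$ together with $g_p(\v,\v) = 0$ yields $F_p(\v,\v) \geq 0$. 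Since $p$ and $\v$ were arbitrary, the null energy condition for $F$ follows.

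The only step requiring any thought is the approximation of a null vector by timelike vectors, but this is immediate from the openness of the timelike cone in the indefinite case. No further obstacle is expected.
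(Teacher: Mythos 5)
Your proposal is correct and matches the paper's approach: the paper simply invokes Theorem \ref{thm:null-bounds} and notes that the rest of the argument is the same as McCann's, namely the forward direction by restricting \eqref{eq:null-bounds} to timelike vectors (where $|g(\v,\v)|=-g(\v,\v)$) and the converse by approximating a nonzero null vector by timelike vectors at the same point and using continuity of $F$ and $g$. Just phrase the approximation step as the standard fact that every nonzero null vector lies in the closure of the timelike cone of $g_p$ (easily seen by diagonalizing $g_p$ and slightly rescaling the negative-definite part), rather than as a statement about what the boundary of the timelike cone consists of.
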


\begin{rem}[Weighted null energy vs weighted Ricci bounds]\label{ourrem28} Applying the previous corollary either to $F=\mathrm{Ric}_g$ for a $C^2$ Lorentzian metric or to 
$$\mathrm{Ric}_g^{N,V}:=\mathrm{Ric}_g+ \mathrm{Hess}_g(V)+ \frac{1}{N-n} DV\otimes DV$$ for $g\in C^2$, $N\geq n$ and a weight $V\in C^2(M)$ (which is assumed constant in case $n=N$),
shows that non-negativity of the (weighted) Ricci tensor in null directions is
equivalent to a local lower bound on the (weighted) Ricci curvature in
timelike directions.
\end{rem}

\section{Discussion of implications for the equivalence to the synthetic NEC of \cite{mccann2024synthetic}}

Having established the equivalence of Definitions \ref{def:classicNEC} and \ref{def:RobertNEC} in the desired regularities, we can now turn to discuss the implications for the equivalence of the classical null energy condition, cf. Definition \ref{def:classicNEC}, to 
McCann's synthetic null energy condition defined below.
 
\begin{defi}[Definition 29 in \cite{mccann2024synthetic}]
Given $N>0$ and $0<q<1$, a proper measured causally geodesic space $(M, d, \ell, m)$
is said to satisfy a (weak) null energy-dimension condition, $(w)\mathrm{NC}^e_q (N)$, (with dimension $N>0$ and $0<q<1$) if 
for any compact $Z\subseteq M$ there exists a constant $C_Z\in \R$ such that the metric measure spacetime $J(Z,Z):=J^+(Z)\cap J^-(Z)$ satisfies the respective timelike curvature dimension condition $(w)\mathrm{TCD}^e_q(C_Z,N)$ with curvature bound $C_Z$ and dimension $N>0$.
\end{defi}

In order to not get bogged down with having to consider various technicalities and cases, we restrict ourselves to considering $N\in [n,\infty)$ below, but similar considerations as in \cite[Thm.\ 25]{mccann2024synthetic} should give that, like the corresponding \cite[Thm.\ 31]{mccann2024synthetic}, the result remains true for any  $N\in (0,\infty]$ if $N\ge n$ is added to $(iii)$.

\begin{thm}[Consistency with the null energy condition]\label{T:NECconsistency}
Let $(M^n,g)$ be a globally hyperbolic spacetime with a $C^2$-Lorentzian metric $g$, $0<q<1$, and let $\ell$ and $\operatorname{vol}_g$ denote the induced Lorentzian time separation function and volume measure.
Fix $N \in (0,\infty]$ and $dm = e^{-V}d\operatorname{vol}_g$ for some $V \in C^2(M)$, and a complete auxiliary Riemannian metric $\tilde g$ on $M^n$ which induces a distance $d$.  
Consider
\begin{enumerate}[(i)]
\item $(M,d,\ell,m) \in \operatorname{wNC}^e_q(N)$
\item $(M,d,\ell,m) \in \operatorname{NC}^e_q(N)$ and
\item every null vector $(v,z) \in TM$ satisfies
$$
\operatorname{Ric}^{(N,V)}(v,v) \ge 0.
$$
\end{enumerate}
Then $(iii)\implies (ii) \implies (i)$.
\end{thm}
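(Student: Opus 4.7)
The plan is to follow McCann's argument for his Theorem 31 in \cite{mccann2024synthetic}, with the crucial step of relating pointwise null non-negativity of the weighted Ricci tensor to a local variable lower bound on its timelike part now supplied by our Corollary \ref{cor:Necvstl} (rather than the corresponding smooth-case statement that needs greater regularity). The implication $(ii)\Rightarrow (i)$ will be essentially immediate from the definitions; the substantive content lies in $(iii)\Rightarrow (ii)$.

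For $(iii)\Rightarrow(ii)$, I would apply Corollary \ref{cor:Necvstl} to the $(0,2)$-tensor field $F=\mathrm{Ric}_g^{N,V}$. Since $g\in C^2(M)$ and $V\in C^2(M)$ (with $V$ constant if $N=n$), the weighted Ricci tensor $\mathrm{Ric}_g^{N,V}=\mathrm{Ric}_g+\mathrm{Hess}_g(V)+\frac{1}{N-n}DV\otimes DV$ is of regularity $C^0$, and hypothesis (iii) is exactly the null non-negativity required by the corollary. Hence for every compact $Z'\subseteq M$ there exists $C_{Z'}^T\in\mathbb{R}$ with $\mathrm{Ric}_g^{N,V}(\v,\v)\ge C_{Z'}^T\,g(\v,\v)$ for every timelike $\v\in TZ'$. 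Given an arbitrary compact $Z\subseteq M$, global hyperbolicity makes $J(Z,Z)$ compact, so I may choose such a $Z'$ containing $J(Z,Z)$ in its interior and set $C_Z:=C_{Z'}^T$. It then remains to invoke the classical transfer from pointwise (weighted) timelike Ricci lower bounds on a globally hyperbolic spacetime to the synthetic timelike curvature-dimension condition, i.e.\ McCann's Theorem 25 in \cite{mccann2024synthetic}, to conclude that $J(Z,Z)\in \mathrm{TCD}_q^e(C_Z,N)$. A careful inspection of that argument shows it relies only on $C^2$-regularity of $g$ and $V$ -- namely, existence and regularity of $\ell_q$-optimal couplings and of Kantorovich potentials, $C^2$-regularity of timelike geodesics, and Jacobi/Riccati/Raychaudhuri analysis along them -- so the transfer remains valid at the required regularity. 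If direct verification proved delicate, the alternative is to approximate $g$ by smooth Lorentzian metrics converging in $C^2$ (e.g.\ via Chru\'sciel-Grant or Kunzinger-Steinbauer-type constructions for globally hyperbolic spacetimes) with correspondingly controlled weighted Ricci tensors and to pass to the limit using stability of the TCD condition under suitable convergence.

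The implication $(ii)\Rightarrow(i)$ is immediate from the definitions, since $\mathrm{TCD}_q^e(K,N)$ tautologically implies $\mathrm{wTCD}_q^e(K,N)$ for any $K\in\mathbb{R}$ and $N>0$ (the weak version demands entropy convexity only along \emph{some} $\ell_q$-optimal coupling of a given pair of endpoint measures, whereas the strong version demands it along \emph{all} such couplings). Applied on each $J(Z,Z)$ with the constants $C_Z$ supplied by (ii), this yields (i). The only genuine subtlety in the argument is confirming that the smooth-to-synthetic transfer invoked for $(iii)\Rightarrow(ii)$ really does go through at the $C^2$-level; everything else either flows directly from Corollary \ref{cor:Necvstl} or follows tautologically from the definitions.
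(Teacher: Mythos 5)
Your overall architecture matches the paper's: $(ii)\Rightarrow(i)$ is definitional, and $(iii)\Rightarrow(ii)$ is Corollary \ref{cor:Necvstl} applied to $F=\mathrm{Ric}_g^{N,V}$ followed by a transfer from pointwise timelike Ricci lower bounds to $\mathrm{TCD}_q^e$. The gap is in how you justify that transfer. You invoke McCann's Theorem 25, which is stated for smooth data, and assert that ``a careful inspection of that argument shows it relies only on $C^2$-regularity'' --- but you never carry out that inspection, and this is exactly the kind of folklore claim the present paper is written to avoid (indeed, the whole point of the note is that such regularity gaps in McCann's proofs can be real: his Theorem 26 genuinely breaks at $C^2$). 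The paper instead closes this step by citing published low-regularity results: for globally hyperbolic $C^{1,1}$-metrics with $C^1$-weights, \cite[Thm.\ 3.2]{braun2024timelike} gives $\mathrm{TCD}_q(K,N)$ from the pointwise bound, \cite[Prop.\ 3.6]{Braunequivalencies} upgrades this to $\mathrm{TCD}_q^*(K,N)$, and \cite[Thm.\ 1.5]{Braunequivalencies} yields $\mathrm{TCD}_q^e(K,N)$ using timelike non-branching, which holds for $C^{1,1}$-metrics by ODE uniqueness of geodesics. Your fallback (smooth approximation in $C^2$ plus ``stability of TCD under suitable convergence'') is likewise only a sketch; such a stability statement is not available off the shelf in the form you would need, so it cannot substitute for the missing verification.

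A second, smaller omission: McCann's $\mathrm{NC}^e_q(N)$ requires the \emph{closed} diamond $J(Z,Z)$ to satisfy $\mathrm{TCD}_q^e(C_Z,N)$, while the low-regularity (or smooth) transfer results apply to open spacetimes. You choose a compact $Z'$ containing $J(Z,Z)$ to get the constant from Corollary \ref{cor:Necvstl}, but you do not explain how the $\mathrm{TCD}$ property of an ambient open region descends to the closed set $J(Z,Z)$. The paper handles this by enclosing $J(Z,Z)$ in $I(\tilde Z,\tilde Z)$ for a slightly larger compact $\tilde Z$ and using global hyperbolicity together with causal convexity of diamonds to conclude that $J(Z,Z)$ inherits $\mathrm{TCD}_q^e(K,N)$ with $K=C_{J(\tilde Z,\tilde Z)}$. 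With these two points repaired --- replacing the ``careful inspection'' claim by the $C^{1,1}$-level citations and adding the open-versus-closed diamond argument --- your proof becomes the paper's.
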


\begin{proof}
    We follow the argument in the proof of the corresponding result \cite[Thm.\ 31]{mccann2024synthetic}. That $(ii)\implies (i)$ holds for general metric measure spacetimes is immediate from the definitions.

    For the proof of $(iii)\implies (ii) $, we first note that even for globally hyperbolic $C^{1,1}$-metrics $g$ (and $C^1$-weights $V$) timelike Ricci curvature bounds imply the $\mathrm{TCD}_q^e(K,N)$ condition: 
   In this case \cite[Thm.\ 3.2]{braun2024timelike} shows that $\mathrm{TCD}_q(K,N)$ is satisfied and \cite[Prop.\ 3.6]{Braunequivalencies} 
   shows that $\mathrm{TCD}_q(K,N)$ implies $\mathrm{TCD}_q^*(K,N)$ (noting that coming from a globally hyperbolic spacetime with a $C^{1,1}$-metric our metric measure spacetime indeed satisfies \cite[Assumption 3.1]{Braunequivalencies}). 
   Lastly by \cite[Thm.\ 1.5]{Braunequivalencies} the latter is equivalent to $\mathrm{TCD}_q^e(K,N)$ for timelike non-branching 
  spaces (which is satisfied for $C^{1,1}$-metrics because this regularity still guarantees uniqueness of geodesics by classical ODE theory) satisfying \cite[Assumption 3.1]{Braunequivalencies}. Combining this with Remark \ref{ourrem28} essentially proves $(iii)\implies (ii)$. The only remaining minor issue is that McCann's definition works with the closed globally hyperbolic metric measure spacetimes $J(Z,Z)$ instead of the open $I(Z,Z)$ ones and only the latter really are Lorentzian manifolds in the classical sense. This could either be argued away as in \cite{mccann2024synthetic}, or -- if we want to stick with applying the low-regularity versions \cite{braun2024timelike, Braunequivalencies} directly -- by noting that $J(Z,Z)\subset I(\tilde{Z},\tilde{Z})$ 
  for any compact $\tilde{Z}$ with $Z$ contained in the interior of  $\tilde{Z}$.
  Since the spaces considered are globally hyperbolic and the diamonds are causally convex, this implies that $J(Z,Z)$ satisfies the $\mathrm{TCD}_q^e(K,N)$ condition with the same constant $K=C_{J(\tilde{Z},\tilde{Z})}$ as $I(\tilde{Z},\tilde{Z})$ (obtained from applying Remark \ref{ourrem28} to the compact set $J(\tilde{Z},\tilde{Z})$).
\end{proof}

\begin{rem} The remaining implication $(i)\implies (iii)$  of the full equivalence does not require our improved version of Theorem \ref{thm:null-bounds} and purely rests on the folklore of the standard arguments for establishing the equivalence of the various $\mathrm{TCD}$ conditions to timelike Ricci curvature bounds still going through for $C^2$-metrics (and $C^2$-weights). 
We were careful to not rely on any arguments of this form in the proof of Theorem \ref{T:NECconsistency} above, however, given that \cite[Thm. 8.5]{mccanntcd} (or alternatively one of the results \cite{braun2024optimalfinsler} or \cite{mondino2022optimal-transport-einstein}, 
where this might be easier to see) 
indeed goes through for $C^2$-metrics,  
$(i)\implies (iii)$ follows exactly as in \cite[Thm.\ 31]{mccann2024synthetic}. 
\end{rem}

\backmatter

\bmhead{Acknowledgements}
This article originated from work on YG’ Masters thesis at the University of Hamburg. We would like to thank Mathias Braun for helpful comments. 
MG acknowledges acknowledges support by the Deutsche Forschungsgemeinschaft (DFG, German Research Foundation) under Germany’s Excellence Strategy -- EXC 2121 ''Quantum Universe'' -- 390833306.






\begin{appendices}

\section{Properties of the flow map \texorpdfstring{$\Phi$}{Phi} \texorpdfstring{from \eqref{eq:flow}}{}} \label{secA1}

\begin{lem} \label{thm:flow-map-solves-ode}
  Let $S:= \mathbb{R}^n \times \mathbb{S}^{n-1}$, $S_0 := \{(p,\hat{v}) \in S \mid g(\hat{v},\hat{v})=0\}$ and set
    \begin{equation*}
        \Phi: \mathbb{R}\times S_0 \to S, \qquad \Phi(t, (p, \hat{v}))  
        := \begin{pmatrix} p,\, \dfrac{e^{2t \cdot g_p} \hat{v}}{\|e^{2t \cdot g_p} \hat{v}\|} \end{pmatrix}\equiv (p,v_{(p, \hat{v})}(t)).
    \end{equation*}
  Then for any initial data $(p,\hat{v})\in S_0$ the curve $t\mapsto v_{(p,\hat{v})}$ solves the ODE 
    \begin{equation*}
        v: I \to S\, ,\ \quad\quad          
        v'(t) = 2\, g_{p_0}\cdot v(t) - 2\,g_{p_0}(v(t),v(t)) \cdot v(t).   
    \end{equation*}
    from \eqref{eq:nec-3-integral-curves} with initial data $\Phi(0,(p,\hat{v}))=(p,\hat{v})$.
\end{lem}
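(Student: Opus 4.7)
The plan is to verify the two assertions of the lemma by direct computation: that $\Phi(0,(p,\hat{v})) = (p,\hat{v})$, and that $v_{(p,\hat{v})}(t) := e^{2t g_p}\hat{v}/\|e^{2t g_p}\hat{v}\|$ solves the stated ODE. The initial condition is immediate, since $e^{0}$ is the identity and $\|\hat{v}\|=1$ as $(p,\hat{v}) \in S_0 \subseteq \mathbb{R}^n \times \mathbb{S}^{n-1}$, whence $v_{(p,\hat{v})}(0) = \hat{v}$.

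For the ODE, I would split $v(t) = w(t)/N(t)$ with $w(t) := e^{2tg_p}\hat{v}$ and $N(t) := \|w(t)\|$. By the defining property of the matrix exponential one has $w'(t) = 2 g_p w(t)$, and since $e^{2tg_p}$ is invertible, $w(t) \neq 0$ for all $t \in \mathbb{R}$, so $N(t) > 0$ and the quotient is smooth. Differentiating the Euclidean norm and using the symmetry identity $\langle u, g_p u\rangle = g_p(u,u)$, I compute
$$N'(t) \;=\; \frac{\langle w(t), w'(t)\rangle}{N(t)} \;=\; \frac{2\, g_p(w(t), w(t))}{N(t)}.$$

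Applying the quotient rule to $v = w/N$, substituting the expressions for $w'$ and $N'$, and then invoking bilinearity of $g_p$ (which yields $g_p(w,w)/N^2 = g_p(v,v)$) together with $w/N = v$, a short calculation produces exactly
$$v'(t) \;=\; 2\, g_p \cdot v(t) \;-\; 2\, g_p(v(t), v(t))\, v(t),$$
which is the ODE from \eqref{eq:nec-3-integral-curves}.

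I do not foresee any real obstacle; this is a routine verification. The only subtle point worth flagging is that the correction term $-2\, g_p(v,v)\,v$ in the ODE is precisely the tangential projection that keeps $v(t) \in \mathbb{S}^{n-1}$, so staying on the unit sphere is built into the normalization, and invertibility of $e^{2tg_p}$ guarantees $N(t)>0$ so that the flow is defined for all $t \in \mathbb{R}$. The null condition $(p,\hat{v}) \in S_0$ plays no role beyond fixing the initial datum, consistent with the main proof's observation that the flow moves transversely off the null cone for $t \neq 0$.
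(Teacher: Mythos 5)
Your verification is correct and follows essentially the same route as the paper's proof: both split $v(t)=w(t)/\|w(t)\|$ with $w(t)=e^{2tg_p}\hat{v}$, use $w'(t)=2g_p\cdot w(t)$ and the derivative of the Euclidean norm, and recover the ODE by the quotient/product rule, with the initial condition following from $e^0=\mathrm{Id}$ and $\|\hat{v}\|=1$. No gaps to flag.
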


\begin{proof}
The proof is a straightforward computation, but we include detailed steps for convenience. Setting $w(t):=e^{2t g_p} \hat{v}$, we obtain $\dot{w}(t)=2g_p\cdot w(t)$ and 
\begin{align*}
    \frac{d}{dt} \frac{1}{\|w(t)\|} 
    &=\frac{d}{dt} \frac{1}{\sqrt{w(t)^T w(t)}}
    = -\frac{1}{2}\frac{1}{\sqrt{w(t)^T w(t)}^{3}} \cdot 2\,w(t)^T \dot{w}(t)\\
    &= -\frac{2}{\|w(t)\|}\, g_p\left(\frac{w(t)}{\|w(t)\|}, \frac{w(t)}{\|w(t)\|}\right).
\end{align*}
So
\begin{align*}
\frac{d}{dt} \frac{w(t)}{\|w(t)\|}
=2g_p\cdot \frac{w(t)}{\|w(t)\|} - 2\,\frac{w(t)}{\|w(t)\|}\, g_p\left(\frac{w(t)}{\|w(t)\|}, \frac{w(t)}{\|w(t)\|}\right)
\end{align*}
Since $v(t)=w(t) / \|w(t)\|$, we see that $v$ indeed solves the desired ODE. That $\Phi(0,(p,\hat{v}))=(p,\hat{v})$ immediately follows from $e^0=\mathrm{Id}$ and $\|\hat{v}\|=1$.
\end{proof}

\begin{lem} \label{thm:flow-map-is-injective}
    The flow map $\Phi: \R \times S_0 \to S$ defined by
$$ \Phi(t,(p_0,v_0)) = \begin{pmatrix} p_0, \frac{e^{2t g_{p_0}} v_0}{\|e^{2t g_{p_0}} v_0\|} \end{pmatrix}$$
is injective on $\mathbb{R} \times S_0$.
\end{lem}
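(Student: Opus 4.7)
The plan is to show that $\Phi(t_1,(p_1,v_1)) = \Phi(t_2,(p_2,v_2))$ forces $(t_1,(p_1,v_1)) = (t_2,(p_2,v_2))$. Projecting onto the first $\R^n$ factor gives $p_1 = p_2 =: p$ immediately. Writing $A := 2g_p$ (which is a symmetric matrix in the fixed Euclidean coordinates) and $s := t_2 - t_1$, the equality of the second components together with $\|v_1\| = \|v_2\| = 1$ simplifies, after multiplying both sides by $e^{-2t_1 g_p}$ and re-normalizing, to
$$v_1 \;=\; \frac{e^{sA}\, v_2}{\|e^{sA}\, v_2\|}.$$
It therefore suffices to establish $s = 0$, since then $v_1 = v_2$ and hence $t_1 = t_2$ follow at once.

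To exploit that $v_1,v_2 \in S_0$ are null with respect to $g_p$, I would use that $A$ is symmetric and trivially commutes with $g_p = A/2$, hence with $e^{sA}$. A short computation then gives
$$g_p(v_1,v_1) \;=\; \frac{v_2^T e^{sA}\, g_p\, e^{sA} v_2}{\|e^{sA} v_2\|^2} \;=\; \frac{v_2^T g_p\, e^{2sA} v_2}{\|e^{sA} v_2\|^2} \;=\; \frac{v_2^T A\, e^{2sA} v_2}{2\,\|e^{sA} v_2\|^2}.$$
Setting $f(t) := v_2^T e^{tA} v_2$, the nullity of $v_2$ reads $f'(0) = v_2^T A v_2 = 0$, and nullity of $v_1$ becomes $f'(2s) = v_2^T A e^{2sA} v_2 = 0$. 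Thus both conditions together state precisely that $f'$ vanishes at $0$ and at $2s$.

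The final step is to show that $f$ is \emph{strictly} convex, which makes $f'$ injective and hence forces $2s = 0$. Diagonalizing the symmetric matrix $A = UDU^T$ with $D = \mathrm{diag}(\lambda_1,\dots,\lambda_n)$ and setting $u := U^T v_2 \in \S^{n-1}$, one obtains $f(t) = \sum_i u_i^2 e^{t\lambda_i}$ and hence $f''(t) = \sum_i u_i^2 \lambda_i^2 e^{t\lambda_i}$. Non-degeneracy of the semi-Riemannian metric $g$ at $p$ ensures every $\lambda_i \neq 0$, while $\|u\| = 1$ guarantees at least one $u_i$ is non-zero, so $f'' > 0$ everywhere. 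The main obstacle I would anticipate is precisely pinning down this strict convexity -- it is the single place where non-degeneracy of $g$ is used, and it would break down if $g_p$ had a zero eigenvalue. Once strict convexity is in hand, $f'(0) = f'(2s) = 0$ yields $s = 0$ and the argument is complete.
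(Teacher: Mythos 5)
Your proposal is correct, and after the common opening (equal base points, absorbing one exponential factor so that $v_1 = e^{sA}v_2/\|e^{sA}v_2\|$ with $A=2g_p$, $s=t_2-t_1$) it concludes by a genuinely different mechanism than the paper. The paper diagonalizes $g_p$ to the standard signature form $\eta_{(k,n-k)}$ by an orthogonal change of coordinates, splits $v'=U_pv$ into its timelike and spacelike blocks, uses nullity plus unit norm to pin the norms of both blocks, and then matches the two scaling factors $\lambda^{-1}e^{\mp 2(t-s)}$ to force $t=s$ and $\lambda=1$. You instead encode nullity of $v_2$ and of $v_1$ as $f'(0)=0=f'(2s)$ for $f(t)=v_2^{T}e^{tA}v_2$, and use strict convexity of $f$ (all eigenvalues of $A$ nonzero by non-degeneracy, and $v_2\neq 0$) to force $s=0$. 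Your route has two advantages: it never needs to identify the proportionality constant $\lambda$, and it works verbatim for an arbitrary symmetric non-degenerate $g_p$ with eigenvalues $\lambda_i\neq 0$ of any size -- whereas the paper's computation of $e^{2(t-s)g_p}$ as $U_p^{T}\,\mathrm{diag}(e^{-2(t-s)}\mathbf{I}_k, e^{2(t-s)}\mathbf{I}_{n-k})\,U_p$ implicitly treats the eigenvalues as $\pm 1$ (an orthogonal conjugation cannot in general bring $g_p$ to $\eta_{(k,n-k)}$), so your argument is in fact slightly more robust on this point. What the paper's block decomposition buys in exchange is a completely explicit picture of how the flow scales the timelike and spacelike parts of a null vector, which also delivers $\lambda=1$ and $w=v$ directly. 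Two cosmetic remarks: in your reduction, $t_1=t_2$ follows from $s=0$ directly (and then $v_1=v_2$), rather than the other way around; and it is worth stating explicitly that $f''>0$ everywhere makes $f'$ strictly increasing, which is the precise form of injectivity you invoke.
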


\begin{proof} 
Suppose $\Phi(t, (p, v)) = \Phi(s, (q, w))$ for two points $(t, (p, v)), (s, (q, w)) \in \mathbb{R} \times S_0$. By definition of $\Phi$, the base points must satisfy $p=q$. We are left with:
\[
\frac{e^{2t g_p} v}{\|e^{2t g_p} v\|} = \frac{e^{2s g_p} w}{\|e^{2s g_p} w\|}.
\]
Thus there exists $\lambda > 0$ such that:
\begin{equation} \label{eq:injectivity-main}
e^{2t g_p} v = \lambda e^{2s g_p} w.
\end{equation}
Rearranging \eqref{eq:injectivity-main} gives:
\begin{equation}\label{eq:injectivity-main-2}
    w = \lambda^{-1} e^{2(t - s) g_p} v.
\end{equation}

To compute the matrix exponential more explicitly we orthogonally diagonalize the symmetric bilinear form $g_p$, finding an orthogonal matrix $U_p$ such that $U_p^T g_p U_p= \eta_{(k,n-k)} $, where $\eta_{(k,n-k)}$ is the standard semi-Riemannian inner product of signature $(k,n-k)$ on $\R^n$. We obtain
$$e^{2(t - s) g_p} = U_p^T \begin{bmatrix}
    e^{-2(t -s)} \, \mathbf{I}_{k \times k} &  \\
     & e^{2(t - s)} \,\mathbf{I}_{(n-k) \times (n-k)}\end{bmatrix} U_p .$$

As in the proof of $S_0$ being a $(2n-2)$-dimensional topological submanifold (cf. \eqref{eq:psiS0}), we note that 
\begin{equation}\label{eq:v'w'inS0}
    v,w\in S_0 \iff v',w'\in \{v'\in \R^n: \|v'\|=1 \text{ and } \eta_{(k,n-k)}(v',v')=0\}\},
\end{equation}
where we define $v':=U_pv$ and $w':=U_pw$, and that $v=w$ if and only if $v'=w'$. We denote the first $k$-components of $v'$ by $v'_-$ and the last $n-k$ components by $v'_+$:
\[
v' =\begin{pmatrix} v'_- \\ v'_+ \end{pmatrix} 
\quad \text{where} \quad
\begin{cases}
v'_- = (v'_1,\dots,v'_k) \in \mathbb{R}^k \\
v'_+ = (v'_{k+1},\dots,v'_{n}) \in \mathbb{R}^{n-k}.
\end{cases}
\]
and analogously for $w'$. Via \eqref{eq:v'w'inS0} the condition of $v,w\in S_0$ becomes equivalent to
\begin{gather} \label{eq:norm-conditions}
\|v_-'\|=\|v'_+\|=\|w'_-\|=\|w'_+\|=\frac{1}{2}. 
\end{gather}

Additionally \eqref{eq:injectivity-main-2} reduces to
\begin{equation}\label{eq:w'}
    w'= \lambda^{-1} \begin{bmatrix}
    e^{-2(t -s)} \, \mathbf{I}_{k \times k} &  \\
     & e^{2(t - s)} \,\mathbf{I}_{(n-k) \times (n-k)}\end{bmatrix} v',
\end{equation}
which splits into 
\[
w'_- = \lambda^{-1} e^{-2(t - s)} v'_- \quad \text{ and } \quad w'_+=\lambda^{-1}  e^{2(t - s)} v'_+.
\]
Taking norms and using \eqref{eq:norm-conditions} we get
$$\lambda^{-1} e^{-2(t - s)}=1=\lambda^{-1}  e^{2(t - s)}  ,$$
which can only hold for $t=s$ and $\lambda=1$. 
Thus $\Phi$ is injective. 
\end{proof}

\end{appendices}


\printbibliography

@article{mccann2024synthetic,
  title={A synthetic null energy condition},
  author={McCann, Robert J},
  journal={Communications in Mathematical Physics},
  volume={405},
  number={2},
  pages={38},
  year={2024},
  publisher={Springer},
  keywords={synthetic_geometry}
}

@article{cavalletti2022review,
  title={A review of Lorentzian synthetic theory of timelike Ricci curvature bounds},
  author={Cavalletti, Fabio and Mondino, Andrea},
  journal={General Relativity and Gravitation},
  volume={54},
  number={11},
  pages={137},
  year={2022},
  publisher={Springer},
  keywords={synthetic_geometry}
}

@article{CMoriginal,
title={Optimal transport in Lorentzian synthetic spaces, synthetic timelike Ricci curvature lower bounds and applications},
 author={Cavalletti, Fabio and Mondino, Andrea},
 journal={Cambridge Journal of Mathematics}, 
  volume={12},
  number={2},
  pages={417--534},
  year={2024}
}

@article{mondino2022optimal-transport-einstein,
  title={An optimal transport formulation of the Einstein equations of general relativity},
  author={Mondino, Andrea and Suhr, Stefan},
  journal={Journal of the European Mathematical Society},
  volume={25},
  number={3},
  pages={933--994},
  year={2022}
}

@article{sturm2006geometry,
  title={On the geometry of metric measure spaces},
  author={Sturm, Karl-Theodor},
  year={2006},
    journal={Acta Mathematica}, 
volume={196},
number={1},
pages={65--131}
}

@article{sturm2006geometry2,
  title={On the geometry of metric measure spaces. II},
  author={Sturm, Karl-Theodor},
  year={2006},
    journal={Acta Mathematica}, 
volume={196},
number={1},
pages={133--177}
}

@article{lott2009ricci,
  title={Ricci curvature for metric-measure spaces via optimal transport},
  author={Lott, John and Villani, C{\'e}dric},
  journal={Annals of Mathematics},
  pages={903--991},
  year={2009},
  publisher={JSTOR},
   keywords={synthetic_geometry}
}

@article{CMM2024,
  title={Optimal Transport on Null Hypersurfaces and the Null Energy Condition},
  author={Cavalletti, Fabio and Manini, Davide and Mondino, Andrea},
  journal={Communications in Mathematical Physics},
  volume={406},
  number={9},
  pages={212},
  year={2025},
  publisher={Springer}
}

@article{CMM2025,
  title={On the geometry of synthetic null hypersurfaces},
  author={Cavalletti, Fabio and Manini, Davide and Mondino, Andrea},
  journal={arXiv preprint arXiv:2506.04934},
  year={2025}
}

@article{BigSenovillaReview,
  title={Singularity theorems and their consequences},
  author={Senovilla, Jos{\'e} MM},
  journal={General Relativity and Gravitation},
  volume={30},
  number={5},
  pages={701--848},
  year={1998},
  publisher={Springer}
}

@article{MondinoRyborz,
  title={On the equivalence of distributional and synthetic Ricci curvature lower bounds},
  author={Mondino, Andrea and Ryborz, Vanessa},
  journal={Journal of Functional Analysis},
  pages={111035},
  year={2025},
  publisher={Elsevier}
}

@article{kunzinger2022hawking,
  title={The Hawking--Penrose singularity theorem for $C^1$-Lorentzian metrics},
  author="{Kunzinger, M., Ohanyan, A., Schinnerl, B., and Steinbauer R.}",
  journal={Communications in Mathematical Physics},
  volume={391},
  number={3},
  pages={1143--1179},
  year={2022},
  publisher={Springer}
}

@article{graf2020singularity,
  title={Singularity theorems for $C^1$-Lorentzian metrics},
  author={Graf, Melanie},
  journal={Communications in Mathematical Physics},
  volume={378},
  number={2},
  pages={1417--1450},
  year={2020},
  publisher={Springer}
}

@article{schinnerl2021note,
  title={A note on the Gannon--Lee theorem},
  author={Schinnerl, Benedict and Steinbauer, Roland},
  journal={Letters in Mathematical Physics},
  volume={111},
  number={6},
  pages={142},
  year={2021},
  publisher={Springer}
}

@article{ketterer2024characterization,
  title={Characterization of the null energy condition via displacement convexity of entropy},
  author={Ketterer, Christian},
  journal={Journal of the London Mathematical Society},
  volume={109},
  number={1},
  pages={e12846},
  year={2024},
  publisher={Wiley Online Library}
}

@article{KunzingerOhanyanVardabasso,
  title={Ricci curvature bounds and rigidity for non-smooth Riemannian and semi-Riemannian metrics},
  author={Kunzinger, Michael and Ohanyan, Argam and Vardabasso, Alessio},
  journal={Manuscripta Mathematica},
  volume={176},
  number={4},
  pages={53},
  year={2025},
  publisher={Springer}
}

@article{braun2024timelike,
  title={Timelike Ricci bounds for low regularity spacetimes by optimal transport},
  author={Braun, Mathias and Calisti, Matteo},
  journal={Communications in Contemporary Mathematics},
  volume={26},
  number={09},
  pages={2350049},
  year={2024},
  publisher={World Scientific}
}

@article{Braunequivalencies,
  title={R{\'e}nyi's entropy on Lorentzian spaces. Timelike curvature-dimension conditions},
  author={Braun, Mathias},
  journal={Journal de Math{\'e}matiques Pures et Appliqu{\'e}es},
  volume={177},
  pages={46--128},
  year={2023},
  publisher={Elsevier}
}

@article{mccanntcd,
  title={Displacement convexity of Boltzmann's entropy characterizes the strong energy condition from general relativity},
  author={McCann, Robert J},
  journal={Cambridge Journal of Mathematics},
volume={8},
number={3},
pages={609--681},
  year={2020}
}

@article{braun2024optimalfinsler,
  title={Optimal transport and timelike lower Ricci curvature bounds on Finsler spacetimes},
  author={Braun, Mathias and Ohta, Shin-ichi},
  journal={Transactions of the American Mathematical Society},
  volume={377},
  number={05},
  pages={3529--3576},
  year={2024}
}
\addcontentsline{toc}{section}{References}  

\end{document}